\documentclass[10pt, a4paper]{amsart}

\usepackage{authblk}
\usepackage{enumitem}
\usepackage[charter]{mathdesign}
\usepackage{amsmath}                

\usepackage{amsfonts}
\usepackage{graphicx, subfigure}
\usepackage{graphics}
\usepackage{epstopdf}

\usepackage[usenames, dvipsnames, pdftex]{color}
\usepackage[colorlinks=true,
        raiselinks=true,
        linkcolor=MidnightBlue,
        citecolor=ForestGreen,
        urlcolor=Mahogany,
        plainpages=false]{hyperref}

\usepackage{tikz}
\usepackage[all]{xy}


    \numberwithin{equation}{section}


    \newcommand{\pr}{\mathsf{P}}

    \def\bm{\mathrm{b}\m}
    
    \def\Q{\mathbb{Q}}

    \def\P{\mathbb{P}}

    \def\R{\mathbb{R}}
    \def\N{\mathbb{N}}

    \def\p{\mathcal{P}}

    \def\d{\mathrm{d}}
    \def\d{\mathrm{d}}
    \def\f{\mathcal{F}}

    \def\q{\mathsf{Q}}
    
    \def\x{\mathfrak{X}}
    \def\y{\mathfrak{Y}}

    \def\m{\mathcal{M}}
    
    \def\b{\mathfrak{B}}

    \def\ve{\varepsilon}


\begin{document}


  {
      \theoremstyle{plain}
      \newtheorem{theorem}{Theorem}
      \newtheorem{lemma}{Lemma}
      \newtheorem{corollary}{Corollary}
      \newtheorem{scheme}{Scheme}
      \newtheorem{proposition}{Proposition}
      \newtheorem{assumption}{Assumption}
      \newtheorem{example}{Example}
      \newtheorem{remark}{Remark}
      \newtheorem{definition}{Definition}
  }


\title[Perturbation of conditional probabilities in total variation]{On the effect of perturbation of conditional probabilities in total variation}

\maketitle

\author{Alessandro Abate}
\address{Department of Computer Science, University of Oxford}
\email{Alessandro.Abate@cs.ox.ac.uk}

\author{Frank Redig}
\address{Delft Institute of Applied Mathematics, TU Delft}
\email{f.h.j.redig@tudelft.nl}

\author{Ilya Tkachev}
\address{Delft Center for Systems and Control, TU Delft}
\email{i.tkachev@tudelft.nl}

\begin{abstract}
  A celebrated result by A. Ionescu Tulcea provides a construction of a probability measure on a product space given a sequence of regular conditional probabilities.
  We study how the perturbations of the latter in the total variation metric affect the resulting product probability measure.
\end{abstract}

\section{Introduction}

The Ionescu Tulcea extension theorem \cite[Section 2.7.2]{a1972} states that given a sequence of stochastic kernels,
there exists a unique probability measure on the product space generated by this sequence,
that is a measure whose conditional probabilities equal to these kernels.
Such a construction is often used in the theory of general Markov Decision Processes \cite{bs1978},
and general Markov Chains \cite{r1984} in particular.
Hence, it is of a certain interest to study how sensitive the resulting product measure is with respect to perturbations of the generating sequence of kernels.
A possible direct application of such result concerns numerical methods,
where characteristics of the original stochastic process are studied over its simpler approximations,
often defined over a finite state space.
Such approximations can be further regarded as a perturbation of the original sequence of kernels \cite{ta2013} which connects to the original problem.

Here we specifically focus on the metric between kernels and measures given by the total variation norm.
Given the pairwise distances between corresponding transition kernels in this metric,
we are interested in bounds on the distance between the resulting product measures.
A similar study was given in \cite{rr2013} which used the Borel assumption,
that is it assumed that spaces involved are (standard) Borel spaces.
However, the bounds obtained in \cite{rr2013} grow linearly with the cardinality of the sequence and hence are not tight:
recall that the total variation distance between two probability measures is always bounded from above by $2$.

In this paper we provide two results:
the first generalizes linear bounds in \cite{rr2013} to the case of arbitrary measurable spaces,
whereas the second result shows that under the Borel assumption it is possible to derive much sharper bounds,
that appear to be precise in some special cases --
e.g. in case of independent products of measures.

The rest of the paper is structured as follows.
Section \ref{sec:prob} gives a problem formulation together with statements of main results.
Proofs are given in Section \ref{sec:proofs},
which is followed by the discussion in Section \ref{sec:discus} and an enlightening example in Section \ref{sec:example}.
The notation is provided in Appendix, Section \ref{sec:appendix}.

\section{Problem statement}
\label{sec:prob}

Let us recall the construction of the product measure given the regular conditional probabilities.
First of all, we need the following notion of a product of a probability measure and a stochastic kernel which extends a more usual product of two measures.

\begin{proposition}\label{prop:product}
  Let $(X,\x)$ and $(Y,\y)$ be arbitrary measurable spaces.
  For any probability measure $\mu\in \p(X,\x)$ and any stochastic kernel $K:X\to \p(Y,\y)$ there exists a unique probability measure $\q\in \p(X\times Y,\x\otimes \y)$,
  denoted by $\q := \mu\otimes K$,
  such that
  \begin{equation*}
    \q(A\times B) = \int_A K(x,B)\mu(\d x)
  \end{equation*}
  for any pair of sets $A\in \x$ and $B\in \y$.
\end{proposition}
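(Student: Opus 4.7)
The plan is the standard construction via sections, combined with the Dynkin $\pi$--$\lambda$ theorem. First I would verify that the right-hand side $\int_A K(x,B)\mu(\d x)$ makes sense: by the defining property of a stochastic kernel, $x\mapsto K(x,B)$ is $\x$-measurable for each fixed $B\in\y$, and it is $[0,1]$-valued, so it is $\mu$-integrable.

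To construct $\q$ on all of $\x\otimes\y$, I would define, for each $C\in\x\otimes\y$ and $x\in X$, the section $C_x:=\{y\in Y:(x,y)\in C\}$, and then set
\begin{equation*}
  \q(C) := \int_X K(x,C_x)\,\mu(\d x).
\end{equation*}
The first task is to show that this is well-defined, i.e.\ that $C_x\in\y$ for every $x$ and that $x\mapsto K(x,C_x)$ is $\x$-measurable. Both are standard Dynkin-class arguments: the collection $\D:=\{C\in\x\otimes\y:C_x\in\y\text{ for all }x\text{ and }x\mapsto K(x,C_x)\text{ is }\x\text{-measurable}\}$ contains the $\pi$-system of measurable rectangles $A\times B$, since $(A\times B)_x=B$ if $x\in A$ and $\emptyset$ otherwise, so $K(x,(A\times B)_x)=\mathbf{1}_A(x)K(x,B)$, which is $\x$-measurable. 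Closure of $\D$ under complementation uses $K(x,(C^c)_x)=1-K(x,C_x)$ and closure under countable disjoint unions uses countable additivity of $K(x,\cdot)$; hence $\D$ is a $\lambda$-system containing the generating $\pi$-system, so $\D=\x\otimes\y$.

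Once $\q$ is well defined as a set function, its properties follow readily. Normalization $\q(X\times Y)=1$ is immediate from $K(x,Y)=1$ and $\mu(X)=1$. Countable additivity of $\q$ reduces, via monotone convergence applied to the partial sums $\sum_{n\le N}K(x,(C_n)_x)$, to the countable additivity of each kernel $K(x,\cdot)$. The formula $\q(A\times B)=\int_A K(x,B)\mu(\d x)$ is precisely the rectangle case computed above.

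Uniqueness follows from a second application of the $\pi$--$\lambda$ theorem: any two probability measures on $\x\otimes\y$ that agree on the $\pi$-system of measurable rectangles agree on the generated $\sigma$-algebra, and the latter is $\x\otimes\y$ by definition. The main obstacle in the whole argument is the measurability step in the definition of $\D$; everything else is routine once one knows that $x\mapsto K(x,C_x)$ is $\x$-measurable for every $C\in\x\otimes\y$.
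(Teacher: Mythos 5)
Your proof is correct and complete: the paper itself does not prove this proposition but defers to \cite[Section 2.6.2]{a1972}, and your argument (sections, a Dynkin $\pi$--$\lambda$ argument for the measurability of $x\mapsto K(x,C_x)$, monotone convergence for countable additivity, and a second $\pi$--$\lambda$ application for uniqueness) is exactly the standard construction given there. Nothing is missing; the measurability step you single out is indeed the only non-routine point.
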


\begin{proof}
  For a proof, see \cite[Section 2.6.2]{a1972}.
\end{proof}

The construction above immediately extends to any finite sequence of spaces by induction,
whereas for the countable products the following result holds true.

\begin{proposition}[Ionescu-Tulcea]\label{prop:I-T}
  Let $\{(X_k,\x_k)\}_{k \in \N_0}$ be a family of arbitary measurable spaces and let $(\Omega_n,\f_n) = \prod_{k=0}^n(X_k,\x_k)$ be product spaces for any $n\in \bar\N_0$.
  For any probability measure $P^0\in \p(X_0,\x_0)$ and any sequence of stochastic kernels $(P^k)_{k\in \N}$,
  where $P^k:\Omega_{k-1}\to \p(X_k,\x_k)$,
  there exists a unique probability measure $\pr \in \p(\Omega_\infty,\f_\infty)$,
  denoted by $\pr := \bigotimes_{k=0}^\infty P^k$,
  such that the finite-dimensional marginal $\pr^n$ of $\pr$ on the measurable space $(\Omega_n,\f_n)$ is given by $\pr^n  = \bigotimes_{k=0}^n P^k$ for any $n\in \N_0$.
\end{proposition}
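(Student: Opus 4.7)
The plan is to reduce everything to Proposition \ref{prop:product}: I would first build the candidate finite-dimensional marginals, then glue them into a finitely additive set function on the cylinder algebra, and finally invoke Carath\'eodory's extension theorem. The only nontrivial step is verifying $\sigma$-additivity, which in the Ionescu-Tulcea setting must be carried out without any topological assumption.

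Concretely, set $\pr^0:=P^0$ and, inductively, $\pr^n:=\pr^{n-1}\otimes P^n$ using Proposition \ref{prop:product}, producing a probability on $(\Omega_n,\f_n)$ for every $n\in\N_0$. A one-line calculation using $P^n(\omega,X_n)=1$ shows that the marginal of $\pr^n$ on $\f_{n-1}$ equals $\pr^{n-1}$, so the family is Kolmogorov-consistent. Let $\pi_n:\Omega_\infty\to\Omega_n$ denote the canonical projection; consistency then makes $\pr(\pi_n^{-1}(A)):=\pr^n(A)$ a well-defined finitely additive probability on the cylinder algebra $\a:=\bigcup_{n\in\N_0}\pi_n^{-1}(\f_n)$, which generates $\f_\infty$.

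The main obstacle is continuity at $\emptyset$ on $\a$; without Borel structure one cannot fall back on inner regularity by compact sets, so I would use the kernels themselves to \emph{construct} a witness point. Suppose for contradiction $(C_m)\subset\a$ is decreasing with $\inf_m\pr(C_m)=\ve>0$ yet $\bigcap_m C_m=\emptyset$. For each $k\in\N_0$ let $h_m^{(k)}(x_0,\dots,x_{k-1})$ denote the conditional probability of $C_m$ given the first $k$ coordinates; by iterated application of Proposition \ref{prop:product}, these functions are measurable, take values in $[0,1]$, are nonincreasing in $m$, and satisfy
\[
h_m^{(k)}(x_0,\dots,x_{k-1})=\int_{X_k}h_m^{(k+1)}(x_0,\dots,x_k)\,P^k(x_0,\dots,x_{k-1},\d x_k)
\]
once $k$ exceeds the cylinder level of $C_m$. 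Dominated convergence carries this identity to the pointwise limits $h^{(k)}:=\lim_m h_m^{(k)}$. Starting from $h^{(0)}\geq\ve$, I iteratively pick $x_k^*$ so that $h^{(k+1)}(x_0^*,\dots,x_k^*)\geq\ve$; such a choice exists because otherwise the integral above would be strictly less than $\ve$. Since each $h_m^{(k)}$ evaluated on the chosen sequence takes only the values $0$ or $1$ as soon as $k$ exceeds $C_m$'s level, the inequality $h_m^{(k)}(x_0^*,\dots,x_{k-1}^*)\geq\ve$ forces $\omega^*:=(x_0^*,x_1^*,\dots)\in C_m$ for every $m$, contradicting $\bigcap_m C_m=\emptyset$. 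Uniqueness of $\pr$ then follows from the uniqueness clause of Carath\'eodory's theorem applied to the generating algebra $\a$.
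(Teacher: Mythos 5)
The paper does not prove this proposition but defers to \cite[Section 2.7.2]{a1972}, and your argument is a correct rendition of exactly that standard proof: finite products via Proposition \ref{prop:product}, consistency, finite additivity on the cylinder algebra, and continuity at $\emptyset$ established by extracting a point of $\bigcap_m C_m$ from the iterated conditional probabilities $h_m^{(k)}$. One small correction: the tower identity relating $h_m^{(k)}$ and $h_m^{(k+1)}$ holds for \emph{every} $k$ (beyond the cylinder level of $C_m$ it is trivial because $h_m^{(k+1)}$ no longer depends on $x_k$), and your inductive selection of the $x_k^*$ needs it at every level, so the qualifier ``once $k$ exceeds the cylinder level'' should be dropped.
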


\begin{proof}
  For a proof, see \cite[Section 2.7.2]{a1972}.
\end{proof}

In the setting of Proposition \ref{prop:I-T},
suppose that we are given another sequence of kernels $(\tilde P^k)_{k=0}^\infty$ and let $\tilde\pr := \bigotimes_{k=0}^\infty \tilde P^k$ be the corresponding product measure.
Given the assumption that $\|P^k - \tilde P^k\| \leq c_k$ for any $k\in \N_0$ and some sequence of reals $(c_k)_{k\in \N_0}$,
we study how the distance $\|\pr^n - \tilde \pr^n\|$ can be bounded.
For the general case of arbitrary measurable spaces,
the following result holds true.

\begin{theorem}\label{thm:lin}
  Let $\{(X_k,\x_k)\}_{k \in \N_0}$ be any family of measurable spaces and let $\tilde \x_k\subseteq \x_k$ for any $k\in \N_0$.
  Denote by $(\Omega_n,\f_n) = \prod_{k=0}^n (X_k,\x_k)$ and $(\Omega_n,\tilde \f_n) = \prod_{k=0}^n (X_k,\tilde \x_k)$ the corresponding product spaces for any $n\in \bar\N_0$.
  Let $P^0 \in \p(X_0,\x_0)$, $\tilde P^0\in \p(X_0,\tilde \x_0)$ and let kernels $P^k :\Omega_{k-1} \to \p(X_k,\x_k)$ and $\tilde P^k:\Omega_{k-1}\to \p(X_k,\tilde \x_k)$ for $k\in \N$ be $\f_{k-1}$- and $\tilde \f_{k-1}$-measurable respectively.
  If a sequence of reals $(c_k)_{k\in \N_0}$ is such that $\|P^k  - \tilde P^k\|\leq c_k$ for all $k\in \N_0$,
  then for any $n\in \N_0$ it holds that
  \begin{equation}\label{eq:thm.lin}
    \|\pr^n - \tilde\pr^n\|\leq \sum_{k=0}^n c_k.
  \end{equation}
\end{theorem}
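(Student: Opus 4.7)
The natural plan is an induction on $n$. The base case $n=0$ is immediate from the hypothesis, namely $\|\pr^0 - \tilde\pr^0\| = \|P^0 - \tilde P^0\| \leq c_0$. For the inductive step, Proposition \ref{prop:product} identifies $\pr^n = \pr^{n-1} \otimes P^n$ and $\tilde\pr^n = \tilde\pr^{n-1} \otimes \tilde P^n$, so I would insert the intermediate measure $\pr^{n-1}\otimes \tilde P^n$ and split by the triangle inequality as
\begin{equation*}
  \|\pr^n - \tilde\pr^n\| \leq \|\pr^{n-1}\otimes P^n - \pr^{n-1}\otimes \tilde P^n\| + \|\pr^{n-1}\otimes \tilde P^n - \tilde\pr^{n-1}\otimes \tilde P^n\|.
\end{equation*}

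The technical core is then a pair of bounds that I would isolate as an auxiliary lemma: for any probability measures $\mu,\mu'$ on $(X,\x)$ and any stochastic kernels $K,K' : X \to \p(Y,\y)$,
\begin{equation*}
  \|\mu\otimes K - \mu\otimes K'\| \leq \sup_{x}\|K(x,\cdot) - K'(x,\cdot)\|, \qquad \|\mu\otimes K - \mu'\otimes K\| \leq \|\mu - \mu'\|.
\end{equation*}
Both follow from the variational formulation of total variation, by testing against a bounded measurable $f : X \times Y \to [-1,1]$. For the first, for each fixed $x$ one has $|f(x,\cdot)| \leq 1$ pointwise and hence $\bigl|\int f(x,y)(K-K')(x,\d y)\bigr| \leq \|K(x,\cdot)-K'(x,\cdot)\|$; integrating against $\mu$ yields the claim. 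For the second, the partial integral $g(x) := \int f(x,y)\,K(x,\d y)$ is measurable with $|g|\leq 1$, so $\bigl|\int g\,\d(\mu-\mu')\bigr| \leq \|\mu-\mu'\|$. Applying these two bounds to the two summands in the display above, together with the hypothesis $\|P^n - \tilde P^n\| \leq c_n$ and the inductive assumption $\|\pr^{n-1} - \tilde\pr^{n-1}\| \leq \sum_{k=0}^{n-1} c_k$, closes the induction and delivers \eqref{eq:thm.lin}.

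The one point requiring care, more bookkeeping than genuine obstacle, is the fact that $\tilde\x_k \subseteq \x_k$: the measures $\pr^n$ and $\tilde\pr^n$ a priori live on distinct $\sigma$-algebras $\f_n$ and $\tilde\f_n$, so $\|\pr^n - \tilde\pr^n\|$ must be read as the total variation of the restriction of $\pr^n$ to $\tilde\f_n$ against $\tilde\pr^n$. Correspondingly, in the auxiliary lemma one tests only against $f$ measurable with respect to the coarser product $\sigma$-algebra, and the requisite $\tilde\f_{n-1}$-measurability of $\omega \mapsto \int f(\omega,x_n)\,\tilde P^n(\omega,\d x_n)$ follows automatically from the $\tilde\f_{n-1}$-measurability of $\tilde P^n$. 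With this restriction convention adopted throughout, the inductive argument proceeds as described without modification.
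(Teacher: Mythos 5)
Your proposal is correct and follows essentially the same route as the paper: the paper's Lemma~\ref{lem:lin} is exactly your auxiliary lemma with the two bounds combined into one statement, obtained by the same decomposition through the intermediate measure $\pr^{n-1}\otimes\tilde P^n$ (the paper splits the integral $\int K_x(A_x)\mu(\d x)-\int\tilde K_x(A_x)\tilde\mu(\d x)$ in precisely this way), followed by the same induction. The only cosmetic difference is that the paper tests against sets $A\in\tilde\f_n$ and invokes the identity $\|\nu-\tilde\nu\|=2\sup_A|\nu(A)-\tilde\nu(A)|$ from \eqref{eq:tv.min}, whereas you use the equivalent variational formulation over functions $|f|\leq 1$; your handling of the sub-$\sigma$-algebra convention matches Remark~\ref{rem:conv}.
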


\begin{remark}\label{rem:conv}
  Through this paper,
  and in particular in the statement of Theorem \ref{thm:lin},
  we use the following convention.
  If the domain of one measure is a subset of the domain of another,
  the total variation distance between them is taken over the smaller domain.
  For example,
  in the setting of Theorem \ref{thm:lin} we have $\|P^0 - \tilde P^0\| = 2\cdot \sup_{A\in \tilde \x_0}|P^0(A) - \tilde P^0(A)|$.
\end{remark}

As it has been mentioned in Introduction,
the bounds \eqref{eq:thm.lin} are not tight.
For example, if $c_k = c>0$ for all $k\in \N_0$ then the right-hand side of \eqref{eq:thm.lin} is $c\cdot n$ and diverges to infinity as $n\to\infty$,
whereas the left-hand side stays bounded above by $2$.
It appears,
that under a rather mild assumption that all involved measurable spaces are (standard) Borel,
a stronger result can be obtained.

\begin{theorem}\label{thm:main}
  Let $\{X_k\}_{k \in \N_0}$ be a family of Borel spaces and let $\Omega_n = \prod_{k=0}^n X_k$ be product spaces for any $n\in \bar\N_0$.
  Let further $P^0,\tilde P^0\in \p(X_0)$ and $P^n,\tilde P^k:\Omega_{k-1}\to \p(X_k)$ for $k\in \N$.
  If a sequence of reals $(c_k)_{n\in \N_0}$ is such that $\|P^k  - \tilde P^k\|\leq c_k$ for all $k\in \N_0$,
  then for any $n\in \N_0$ it holds that
  \begin{equation}\label{eq:thm.main}
    \|\pr^n - \tilde\pr^n\|\leq 2 - 2\prod_{k=0}^n\left(1-\frac12c_k\right).
  \end{equation}
\end{theorem}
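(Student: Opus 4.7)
The approach I would take is the classical coupling method, which becomes effective under the Borel assumption. Recall that, with the paper's convention for total variation (so $\|\cdot\|$ takes values in $[0,2]$), for any two probability measures $\mu,\nu$ on a Borel space there exists a \emph{maximal coupling} $\m(\mu,\nu)\in\p(X\times X)$, i.e.\ a joint distribution with marginals $\mu,\nu$ such that $\m(\mu,\nu)(\{x=y\})=1-\tfrac12\|\mu-\nu\|$. The plan is to build, by induction in $n$, a coupling $\q^n\in\p(\Omega_n\times\Omega_n)$ of $\pr^n$ and $\tilde\pr^n$ satisfying
\begin{equation*}
  \q^n(\{\omega=\tilde\omega\})\geq \prod_{k=0}^n\Bigl(1-\tfrac{c_k}{2}\Bigr).
\end{equation*}
The bound \eqref{eq:thm.main} will then follow from the coupling inequality $\|\pr^n-\tilde\pr^n\|\leq 2\,\q^n(\{\omega\neq\tilde\omega\})$.

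Start with $\q^0:=\m(P^0,\tilde P^0)$, for which $\q^0(\{x_0=\tilde x_0\})\geq 1-c_0/2$. Given $\q^{n-1}$, define a coupling kernel $K^n:\Omega_{n-1}\times\Omega_{n-1}\to\p(X_n\times X_n)$ that matches the next coordinate whenever the pasts agree and decouples otherwise:
\begin{equation*}
  K^n(\omega,\tilde\omega):=\begin{cases}\m\bigl(P^n(\omega,\cdot),\,\tilde P^n(\omega,\cdot)\bigr),&\omega=\tilde\omega,\\ P^n(\omega,\cdot)\otimes \tilde P^n(\tilde\omega,\cdot),&\omega\neq\tilde\omega,\end{cases}
\end{equation*}
and set $\q^n:=\q^{n-1}\otimes K^n$ by Proposition~\ref{prop:product}, identifying $(\Omega_{n-1}\times\Omega_{n-1})\times(X_n\times X_n)$ with $\Omega_n\times\Omega_n$. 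In either branch the marginals of $K^n(\omega,\tilde\omega)$ are $P^n(\omega,\cdot)$ and $\tilde P^n(\tilde\omega,\cdot)$, so $\q^n$ has marginals $\pr^n$ and $\tilde\pr^n$. On the diagonal, the assumption $\|P^n(\omega,\cdot)-\tilde P^n(\omega,\cdot)\|\leq c_n$ yields $K^n(\omega,\omega)(\{x_n=\tilde x_n\})\geq 1-c_n/2$, so integrating against $\q^{n-1}$ gives $\q^n(\{\omega=\tilde\omega\})\geq (1-c_n/2)\,\q^{n-1}(\{\omega=\tilde\omega\})$, which closes the induction.

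The main technical obstacle, and the only place where the standard Borel hypothesis is essential, is verifying that $K^n$ is a genuine stochastic kernel, i.e.\ that the assignment $(\mu,\nu)\mapsto\m(\mu,\nu)$ can be made jointly measurable in the parameters. One can exhibit it explicitly: choose $\pi:=\tfrac12(\mu+\nu)$ with densities $f:=\d\mu/\d\pi$ and $g:=\d\nu/\d\pi$, put $(f\wedge g)\cdot\pi$ on the diagonal, and couple the remaining defects $(f-f\wedge g)\cdot\pi$ and $(g-f\wedge g)\cdot\pi$ as an independent product. In the Borel category the Radon-Nikodym derivatives, the diagonal map, and the product construction all depend measurably on the parameters, so $K^n$ is a well-defined stochastic kernel. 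Granted this point, the result follows by iterating Proposition~\ref{prop:product} along the lines above.
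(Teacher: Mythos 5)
Your proposal is correct and follows essentially the same route as the paper: the paper's Lemma \ref{lem:main.1} constructs exactly your coupling (its $\gamma$-coupling is your explicit maximal coupling, and its kernel $\kappa$ is your $K^n$ --- maximal on the diagonal, independent product off it), iterates it by induction in Lemma \ref{lem:main.2}, and converts the diagonal mass bound into \eqref{eq:thm.main} via the duality \eqref{eq:tv.min}. The only cosmetic difference is that the paper tracks $\|\pr^n\wedge\tilde\pr^n\|$ rather than the diagonal probability directly, and handles the measurability of the coupling kernel by citing the measurable Hahn--Jordan decomposition instead of your Radon--Nikodym construction.
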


The proofs of both theorems are given in the next section.

\section{Proofs of the main results}
\label{sec:proofs}

\subsection{Proof of Theorem \ref{thm:lin}}
\label{ssec:proof.lin}
We prove both theorems by induction,
by first studying how the perturbation of a measure and a kernel in Proposition \ref{prop:product} propagates to the product measure.
The following lemma provides such study in the setting of Theorem \ref{thm:lin}.

\begin{lemma}\label{lem:lin}
  Let $(X,\x)$ and $(Y,\y)$ be two measurable spaces,
  and let $\tilde \x \subseteq \x$ and $\tilde \y\subseteq \y$.
  Consider $\mu\in \p(X,\x)$ and $\tilde\mu\in \p(X,\tilde \x)$,
  and suppose that kernels $K:X\to \p(Y,\y)$ and $\tilde K:X\to \p(Y,\tilde \y)$ are $\x$- and $\tilde \x$-measurable,
  respectively.
  Denote by $\q := \mu\otimes K$ and $\tilde \q := \tilde\mu\otimes \tilde K$ the corresponding product measures.
  It holds that
  \begin{equation}\label{eq:lem.lin}
    \|\q - \tilde \q\|\leq \|\mu - \tilde\mu\| + \|K - \tilde K\|
  \end{equation}
  where in \eqref{eq:lem.lin} we follow the convention in Remark \ref{rem:conv}.
\end{lemma}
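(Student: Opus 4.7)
The plan is to prove the lemma by a standard triangle-inequality decomposition, interpolating between $\q=\mu\otimes K$ and $\tilde\q=\tilde\mu\otimes\tilde K$ through an intermediate product measure in which the kernel is replaced but the prior marginal is not. A natural choice is the measure $\q':=\mu\otimes\tilde K$, which is well-defined on $\x\otimes\tilde\y$ because $\tilde K$ is $\tilde\x$-measurable and hence (thanks to $\tilde\x\subseteq\x$) also $\x$-measurable, so Proposition \ref{prop:product} applies. By the triangle inequality, taking all total variations over the common smaller domain $\tilde\x\otimes\tilde\y$ as in Remark \ref{rem:conv},
\begin{equation*}
\|\q-\tilde\q\|\;\le\;\|\mu\otimes K-\mu\otimes\tilde K\|\;+\;\|\mu\otimes\tilde K-\tilde\mu\otimes\tilde K\|.
\end{equation*}

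For the first term I would use the dual characterisation $\|Q_1-Q_2\|=2\sup_{C}|Q_1(C)-Q_2(C)|$, with $C\in\tilde\x\otimes\tilde\y$. For such $C$ the $x$-section $C_x=\{y:(x,y)\in C\}$ lies in $\tilde\y$, so the pointwise bound $|K(x,C_x)-\tilde K(x,C_x)|\le\tfrac12\|K(x,\cdot)-\tilde K(x,\cdot)\|\le\tfrac12\|K-\tilde K\|$ holds by the total-variation convention on the smaller $\sigma$-algebra. Integrating against $\mu$ yields $|\mu\otimes K(C)-\mu\otimes\tilde K(C)|\le\tfrac12\|K-\tilde K\|$, and the factor $2$ in the definition of $\|\cdot\|$ gives $\|\mu\otimes K-\mu\otimes\tilde K\|\le\|K-\tilde K\|$.

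For the second term, observe that for $C\in\tilde\x\otimes\tilde\y$ the map $x\mapsto\tilde K(x,C_x)$ is $\tilde\x$-measurable (this is the measurability of the integrand in Proposition \ref{prop:product}, applied with the kernel and marginal both on $\tilde\x$) and takes values in $[0,1]$. Writing such a function as $\tfrac12(1+g)$ with $|g|\le 1$ and using the characterisation $\|\mu-\tilde\mu\|=\sup_{|g|\le 1}|\mu(g)-\tilde\mu(g)|$ on $\tilde\x$, one obtains $|\mu\otimes\tilde K(C)-\tilde\mu\otimes\tilde K(C)|\le\tfrac12\|\mu-\tilde\mu\|$, and once again the factor $2$ supplies $\|\mu\otimes\tilde K-\tilde\mu\otimes\tilde K\|\le\|\mu-\tilde\mu\|$. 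Summing the two bounds gives \eqref{eq:lem.lin}.

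The only subtle point — and the step I would check most carefully — is the consistent use of the convention from Remark \ref{rem:conv}: all three total variations on the right-hand side of the triangle inequality must be taken on the smaller $\sigma$-algebras, so that the sectional argument $C_x\in\tilde\y$ is valid and the resulting integrand $x\mapsto\tilde K(x,C_x)$ is $\tilde\x$-measurable, letting the bound $\tfrac12\|\mu-\tilde\mu\|$ (also over $\tilde\x$) apply. Once the $\sigma$-algebras are tracked correctly, the rest is a routine triangle-inequality estimate.
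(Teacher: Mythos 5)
Your proposal is correct and follows essentially the same route as the paper: the paper's proof also interpolates through $\mu\otimes\tilde K$ (by adding and subtracting $\int_X \tilde K_x(A_x)\mu(\d x)$ inside the estimate for a fixed set $A\in\tilde\x\otimes\tilde\y$), bounds the kernel term by $\sup_x\sup_{B\in\tilde\y}|K_x(B)-\tilde K_x(B)|$ and the measure term by $\sup_{B\in\tilde\x}|\mu(B)-\tilde\mu(B)|$, and concludes via the duality \eqref{eq:tv.min}. Your explicit tracking of the sub-$\sigma$-algebras and of the measurability of $x\mapsto\tilde K_x(C_x)$ matches the convention of Remark \ref{rem:conv} exactly as the paper uses it.
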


\begin{proof}
  Let the set $A\in \tilde \x\otimes \tilde\y$ be arbitrary,
  and denote by $A_x = \{y\in Y:(x,y)\in A\}$ the $x$-section of $A$ for any $x\in X$.
  It follows from \cite[Section 2.6.2]{a1972} that
  \begin{equation*}
    \q(A) = \int_X K_x(A_x)\mu(\d x),\qquad \tilde\q(A) = \int_X \tilde K_x(A_x)\tilde \mu(\d x)
  \end{equation*}
  and as a result:
  \begin{align*}
    |\q(A) - \tilde \q(A)| &= \left|\int_X K_x(A_x)\mu(\d x) - \int_X \tilde K_x(A_x)\tilde \mu(\d x)\right|
    \\
    &\leq \left|\int_X \left(K_x(A_x) - \tilde K_x(A_x)\right)\mu(\d x)\right| + \left|\int_X \tilde K_x(A_x)(\mu - \tilde \mu)(\d x)\right|
    \\
    &\leq \sup_{x\in X}\sup_{B\in \tilde \y}\left|K_x(B) - \tilde K_x(B)\right| + \sup_{B\in \tilde \x}|\mu(B) - \tilde \mu(B)|
    \\
    &= \frac12\left(\|\mu - \tilde\mu\| + \|K - \tilde K\|\right)
  \end{align*}
  which together with \eqref{eq:tv.min} implies \eqref{eq:lem.lin}.
\end{proof}

To prove Theorem \ref{thm:lin} we are only left to apply the result of Lemma \ref{lem:lin} by induction to $\mu = \pr^n$,
$\tilde \mu = \tilde \pr^n$,
$K = P^{n+1}$ and $\tilde K = \tilde P^{n+1}$ for $n\in \N_0$.

\subsection{Proof of Theorem \ref{thm:main}}

Again, we are going to apply induction,
however in the current case the analogue of Lemma \ref{lem:lin} requires a more intricate proof via the coupling techniques.
Let us briefly recall some facts about the coupling.
Given an arbitrary measurable space $(\Omega,\f)$,
a coupling of two probability measures $\pr,\tilde \pr \in \p(\Omega,\f)$ is a probability measure $\P\in \p(\Omega^2,\f^2)$ such that the marginals of $\P$ are given by
\begin{equation}\label{eq:coup}
  \pi_*\P = \pr,\qquad \tilde\pi_*\P = \tilde \pr,
\end{equation}
where $\pi(\omega,\tilde \omega) = \omega$ and $\tilde \pi(\omega,\tilde\omega) = \tilde \omega$ for all $(\omega,\tilde\omega)\in \Omega^2$ \cite[Section 1.1]{l1992}.
In particular, if $\Omega$ is a Borel space and $\f = \b(\Omega)$,
we have the following result:
\begin{equation}\label{eq:coup.in}
  \P(\pi = \tilde \pi) \leq \|\pr\wedge\tilde\pr\|,
\end{equation}
The inequality \eqref{eq:coup.in} is called the coupling inequality \cite[Section 1.2]{l1992};
it holds true for any coupling measure $\P$ as per \eqref{eq:coup}.
At the same time,
thanks to the fact that $\Omega$ is a Borel space,
there always exists a maximal coupling:
the one for which the equality holds in \eqref{eq:coup.in}.
As for some pairs of probability measures there may be several choices of their maximal coupling,
here we focus on the $\gamma$-coupling \cite[Section 1.5]{l1992}.

\begin{definition}[$\gamma$-coupling]
  Let $Z$ be a Borel space and let $\nu, \tilde \nu \in \p(Z)$ be two probability measures on it.
  The $\gamma$-coupling of $(\nu,\tilde \nu)$ is a measure $\gamma \in \p(Z^2)$ given by
  \begin{equation*}
    \gamma(\nu,\tilde \nu):= (\psi_Z)_*(\nu\wedge\tilde \nu) + 1_{[0,1)}(\|\nu\wedge\tilde\nu\|)\cdot\frac{(\nu - \tilde \nu)^+\otimes (\nu-\tilde\nu)^-}{1 - \|\nu\wedge\tilde\nu\|}
  \end{equation*}
  where $\psi_Z:Z\to Z^2$ is the diagonal map on $Z$ given by $\psi_Z:z\mapsto (z,z)$.
\end{definition}

The following lemma is the key in the proof of Theorem \ref{thm:main}.

\begin{lemma}\label{lem:main.1}
  Let $X$ and $Y$ be two Borel spaces,
  and let $\mu,\tilde \mu\in \p(X)$ and $K,\tilde K:X\to \p(Y)$.
  Denote by $\q := \mu\otimes K$ and $\tilde \q := \tilde\mu\otimes \tilde K$ the corresponding product measures.
  It holds that
  \begin{equation}\label{eq:lem.main.1}
    \|\q \wedge \tilde \q\|\geq \|\mu \wedge \tilde \mu\|\cdot\inf_{x\in X}\|K_x \wedge \tilde K_x\|.
  \end{equation}
\end{lemma}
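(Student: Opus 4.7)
The plan is to produce an explicit coupling $\P$ of $\q$ and $\tilde\q$ on $(X\times Y)^2$ and then invoke the coupling inequality \eqref{eq:coup.in}, which yields $\|\q \wedge \tilde\q\| \geq \P(\pi = \tilde\pi)$ for any such coupling. The lemma will then follow from a concrete lower bound on $\P(\pi = \tilde\pi)$.

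To construct $\P$ I combine two $\gamma$-couplings. At the outer level, set $\Gamma := \gamma(\mu,\tilde\mu) \in \p(X^2)$. At the inner level, for each pair $(x,\tilde x) \in X^2$ let $\Lambda_{(x,\tilde x)} := \gamma(K_x,\tilde K_{\tilde x}) \in \p(Y^2)$. Identifying $(X\times Y)^2$ with $X^2 \times Y^2$ by reordering coordinates as $(x,\tilde x, y,\tilde y)$, define
$$\P(A) := \int_{X^2} \Lambda_{(x,\tilde x)}\bigl(A_{(x,\tilde x)}\bigr)\,\Gamma(dx,d\tilde x),$$
where $A_{(x,\tilde x)}$ denotes the $(x,\tilde x)$-section of a measurable $A\subseteq X^2\times Y^2$. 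Using the marginal identities for $\gamma$-couplings ($\Gamma$ has marginals $\mu,\tilde\mu$; $\Lambda_{(x,\tilde x)}$ has marginals $K_x,\tilde K_{\tilde x}$), a routine Fubini computation on measurable rectangles shows that $\pi_*\P = \q$ and $\tilde\pi_*\P = \tilde\q$, so $\P$ is indeed a coupling of $\q$ and $\tilde\q$.

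To bound $\P(\pi=\tilde\pi)=\P(\{x=\tilde x,\ y=\tilde y\})$ I decompose $\Gamma$ into its on-diagonal part $(\psi_X)_*(\mu\wedge\tilde\mu)$ and its off-diagonal part. The off-diagonal part is supported on $\{x\neq\tilde x\}$ and contributes nothing. On the diagonal, the inner coupling $\Lambda_{(x,x)} = \gamma(K_x,\tilde K_x)$ assigns mass exactly $\|K_x\wedge\tilde K_x\|$ to the diagonal $\{y=\tilde y\}$ by construction. Hence
$$\P(\pi=\tilde\pi) = \int_X \|K_x\wedge\tilde K_x\|\,(\mu\wedge\tilde\mu)(dx) \geq \|\mu\wedge\tilde\mu\|\cdot\inf_{x\in X}\|K_x\wedge\tilde K_x\|,$$
using that $\mu\wedge\tilde\mu$ is a positive measure of total mass $\|\mu\wedge\tilde\mu\|$. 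Combining with the coupling inequality gives \eqref{eq:lem.main.1}.

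The main obstacle is measurability: one must verify that $(x,\tilde x)\mapsto\Lambda_{(x,\tilde x)} = \gamma(K_x,\tilde K_{\tilde x})$ is a genuine stochastic kernel from $X^2$ to $Y^2$, i.e.\ jointly measurable in $(x,\tilde x)$. This reduces to the joint measurability of the overlap $\|K_x\wedge\tilde K_{\tilde x}\|$ and of the Jordan components $(K_x-\tilde K_{\tilde x})^\pm$, which is precisely where the Borel hypothesis on $X$ and $Y$ enters in an essential way, via standard disintegration / measurable selection arguments. Once this technical point is secured, the remainder of the proof reduces to the Fubini and measure-decomposition manipulations described above.
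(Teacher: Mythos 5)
Your proposal is correct and follows essentially the same route as the paper: a sequential coupling built from the $\gamma$-coupling of $(\mu,\tilde\mu)$ and a measurable family of couplings of the pairs $(K_x,\tilde K_{\tilde x})$, followed by the coupling inequality and the computation of the mass assigned to $\Delta_X\times\Delta_Y$. The only (immaterial) difference is that the paper uses the independent coupling $K_x\otimes\tilde K_{\tilde x}$ off the diagonal of $X^2$ where you use the $\gamma$-coupling everywhere; since the event $\{\pi=\tilde\pi\}$ forces $x=\tilde x$, this choice does not affect the bound, and the joint measurability of the Jordan components that you flag as the main obstacle is exactly the point the paper discharges by citing the lemma of Revuz on the Hahn--Jordan decomposition of bounded signed kernels.
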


\begin{proof}
  The proof is done via coupling of measures $\q$ and $\tilde \q$ in a sequential way.

  Firstly, let $m = \gamma(\mu,\tilde\mu) \in \p(X^2)$ be the $\gamma$-coupling of $(\mu,\tilde\mu)$.
  Secondly, we define a kernel $\kappa:X^2\to \p(Y^2)$ by the following formula:
  \begin{equation*}
    \kappa_{x\tilde x} = 1_{\Delta_X}(x,\tilde x)\cdot\gamma\left(K_x,\tilde K_{\tilde x}\right) + 1_{\Delta_X^c}(x,\tilde x)\cdot (K_x\otimes \tilde K_{\tilde x})
  \end{equation*}
  where $\Delta_X$ is the diagonal of $X^2$ as per Section \ref{sec:appendix}.
  Clearly, the measure $\kappa_{x\tilde x}$ is a coupling of measures $K_x$ and $\tilde K_{\tilde x}$ for any $(x,\tilde x)\in X^2$,
  which is a maximal coupling on the diagonal,
  and the independent (or product) coupling off the diagonal.
  Note that $\kappa:X^2\to \p(Y^2)$ is indeed a kernel.
  The only non-trivial part of the latter statement concerns the measurability of the positive part and the negative part in the Hahn-Jordan decomposition of the kernel,
  which follows directly from \cite[Lemma 1.5, Chapter 6]{r1984}.
  Furthermore, the measurability of $K_x\otimes \tilde K_{\tilde x}$ obviously holds for any measurable rectangle $A\times \tilde A\subseteq Y^2$ and extends to the whole product $\sigma$-algebra by the monotone class theorem (see e.g. \cite[Theorem 1.3.9]{a1972}).

  Define $\Q: = m\otimes \kappa \in \p(X^2\times Y^2)$ to be the product measure,
  and further denote by $\pi_X$, $\tilde \pi_X$, $\pi_Y$ and $\tilde \pi_Y$ the obvious projection maps from that space, e.g.
  \begin{equation*}
    \tilde\pi_X(x,\tilde x,y,\tilde y) = \tilde x\in X.
  \end{equation*}
  We claim that the random element $(\pi_X,\pi_Y)$ is distributed according to $\q$ and $(\tilde \pi_X,\tilde \pi_Y)$ is distributed according to $\tilde \q$. Indeed, for any $A\in \b(X)$ and $B\in \b(Y)$ it holds that
  \begin{align*}
    \Q(\pi_X\in A,\pi_Y\in B) & = \Q((A\times X)\times (B\times Y)) &\text{ by definition of $\pi_X$ and $\pi_Y$}
    \\
    &= \int_{A\times X}\kappa_{x\tilde x}(B\times Y)m(\d x\times \d \tilde x) &\text{ by Proposition \ref{prop:product}}
    \\
    & = \int_{A\times X}K_x(B)m(\d x \times \d\tilde x) &\text{ since $\kappa_{x\tilde x}$ is a coupling}
    \\
    &= \int_A K_x(B)\mu(\d x) &\text{ since $\mu$ is a marginal of $m$}
    \\
    &= \q(A\times B). &\text{ by Proposition \ref{prop:product}}
  \end{align*}
  Since both $(\pi_X,\pi_Y)_*\Q$ and $\q$ are probability measures on a Borel space $X\times Y$,
  and they have been shown to agree on the class measurable rectangles which is closed under finite intersections,
  they are equal \cite[Proposition 3.6, Chapter 0]{r1984}.
  Similarly, it holds that $(\tilde \pi_X, \tilde \pi_Y)_*\Q = \tilde \q$.
  Thus, by the coupling inequality \eqref{eq:coup.in}
  \begin{equation*}
    \|\q \wedge \tilde \q\|\geq \Q(\pi_X = \tilde \pi_X,\pi_Y = \tilde \pi_Y)
  \end{equation*}
  On the other hand, for the latter term the following holds:
  \begin{align*}
    \Q(\pi_X = \tilde \pi_X,\pi_Y = \tilde \pi_Y) & = \Q((\Delta_X\times Y^2)\cap (X^2\times\Delta_Y)) = \Q(\Delta_X\times \Delta_Y)
    \\
    & = \int_{\Delta_X}\kappa_{x\tilde x}(\Delta_Y)m(\d x\times \d\tilde x)
    \\
    & \geq m(\Delta_X)\cdot \inf_{(x,\tilde x)\in \Delta_X}\kappa_{x\tilde x}(\Delta_Y) = m(\Delta_X)\cdot \inf_{x\in X}\kappa_{xx}(\Delta_Y)
    \\
    & = \|\mu\wedge\tilde\mu\|\cdot\inf_{x\in X}\|K_x \wedge \tilde K_x\|.
  \end{align*}
  and the lemma is proved.
\end{proof}

As we have mentioned above,
Lemma \ref{lem:main.1} is an analogue of Lemma \ref{lem:lin}.
However,
unlike in Section \ref{ssec:proof.lin},
here we cannot go directly from Lemma \ref{lem:main.1} to Theorem \ref{thm:main} and we need the following auxiliary result first:

\begin{lemma}\label{lem:main.2}
  Let $\{X_k\}_{k \in \N_0}$ be a family of Borel spaces and let $\Omega_n = \prod_{k=0}^n X_k$ be product spaces for any $n\in \bar\N_0$.
  Let further $P^0,\tilde P^0\in \p(X_0)$ and $P^n,\tilde P^n:\Omega_{n-1}\to \p(X_n)$ for $n\in \N$ be initial probability measures and conditional stochastic kernels respectively.
  Denote:
  \begin{equation*}
    a_0 := \|P^0 \wedge \tilde P^0\|,\quad a_k := \inf_{\omega_{k-1}\in \Omega_{k-1}}\|P^k_{\omega_{k-1}} \wedge \tilde P^k_{\omega_{k-1}}\|,\quad k\in \N.
  \end{equation*}
  and further $\pr^n := \bigotimes_{k=0}^n P^k$, $\tilde\pr^n := \bigotimes_{k=0}^n \tilde P^k$.
  For any $n\in \N_0$ it holds that
  \begin{equation}\label{eq:lem.main.2}
    \|\pr^n \wedge \tilde \pr^n\|\geq \prod_{k=0}^n a_k.
  \end{equation}
\end{lemma}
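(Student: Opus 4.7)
The plan is a direct induction on $n$, using Lemma \ref{lem:main.1} as the single-step tool and the obvious factorization $\pr^n = \pr^{n-1}\otimes P^n$, $\tilde\pr^n = \tilde\pr^{n-1}\otimes \tilde P^n$ that is built into the Ionescu–Tulcea construction of Proposition \ref{prop:I-T}.

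\textbf{Base case.} For $n=0$ we have $\pr^0 = P^0$ and $\tilde\pr^0 = \tilde P^0$, so $\|\pr^0\wedge\tilde\pr^0\| = \|P^0\wedge\tilde P^0\| = a_0$, which is exactly \eqref{eq:lem.main.2} for $n=0$.

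\textbf{Inductive step.} Suppose \eqref{eq:lem.main.2} holds for $n-1$, i.e.\ $\|\pr^{n-1}\wedge\tilde\pr^{n-1}\|\geq \prod_{k=0}^{n-1}a_k$. Apply Lemma \ref{lem:main.1} with $X=\Omega_{n-1}$, $Y=X_n$, $\mu = \pr^{n-1}$, $\tilde\mu=\tilde\pr^{n-1}$, $K=P^n$, $\tilde K=\tilde P^n$; by Proposition \ref{prop:I-T} the products $\mu\otimes K$ and $\tilde\mu\otimes\tilde K$ coincide respectively with $\pr^n$ and $\tilde\pr^n$. Lemma \ref{lem:main.1} therefore yields
\begin{equation*}
  \|\pr^n\wedge\tilde\pr^n\|\;\geq\; \|\pr^{n-1}\wedge\tilde\pr^{n-1}\|\cdot\inf_{\omega_{n-1}\in\Omega_{n-1}}\|P^n_{\omega_{n-1}}\wedge\tilde P^n_{\omega_{n-1}}\|\;=\;\|\pr^{n-1}\wedge\tilde\pr^{n-1}\|\cdot a_n.
\end{equation*}
Combining this with the induction hypothesis gives $\|\pr^n\wedge\tilde\pr^n\|\geq a_n\prod_{k=0}^{n-1}a_k = \prod_{k=0}^n a_k$, which is \eqref{eq:lem.main.2}.

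\textbf{Obstacles.} There is essentially no obstacle beyond what was already overcome in Lemma \ref{lem:main.1}: the coupling-based lower bound on $\|\q\wedge\tilde\q\|$ is exactly the right multiplicative inequality to iterate, so the induction closes in one line. The only point worth checking carefully is that Lemma \ref{lem:main.1} applies at each step, i.e.\ that $\Omega_{n-1}$ and $X_n$ are Borel spaces (true by assumption, since a countable product of Borel spaces is Borel) and that $P^n,\tilde P^n$ are genuine stochastic kernels on $\Omega_{n-1}$ (which is exactly the hypothesis of the lemma). No additional measurability issue arises.
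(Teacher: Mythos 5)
Your proof is correct and follows essentially the same route as the paper: induction on $n$ with the base case $n=0$ immediate, and the inductive step obtained by applying Lemma \ref{lem:main.1} to $\mu = \pr^{n-1}$, $\tilde\mu = \tilde\pr^{n-1}$, $K = P^n$, $\tilde K = \tilde P^n$. The only difference is cosmetic (the paper indexes the step as $n \to n+1$ rather than $n-1 \to n$).
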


\begin{proof}
  The inequality \eqref{eq:lem.main.2} can be proved by induction.
  Clearly, it holds true for $n = 0$.
  Suppose it holds true for some $n\in \N_0$,
  then in the setting of Lemma \ref{lem:main.1} put $X = \Omega_n$, $Y = X_{n+1}$, $\mu = \pr^n$, $\tilde \mu = \tilde \pr^n$, $K = P^{n+1}$ and $\tilde K = \tilde P^{n+1}$.
  For the product measures we obtain that $\q = \pr^{n+1}$ and $\tilde \q = \tilde \pr^{n+1}$,
  so \eqref{eq:lem.main.2} now follows immediately from \eqref{eq:lem.main.1}.
\end{proof}

To prove Theorem \ref{thm:main} we are only left to notice that \eqref{eq:thm.main} is equivalent to \eqref{eq:lem.main.2} thanks to the duality argument in \eqref{eq:tv.min}.

\section{Discussion}
\label{sec:discus}

Let us discuss the results obtained above and their connection to the literature.
First of all, Theorem \ref{thm:lin} extends the bounds on the total variation distance between the finite-dimensional marginals $\pr^n$ and $\tilde\pr^n$,
obtained in \cite[Theorem 1]{rr2013} under the Borel assumption,
to the case of arbitrary measurable spaces.
Its proof is inspired by the one of \cite[Lemma 1]{ta2013} that focused on the Markovian case $P^1 = P^2 = \dots$ exclusively.
The work in \cite{ta2013} also emphasized the benefit of dealing with sub-$\sigma$-algebras as in Theorem \ref{thm:lin}:
the perturbation constants $c_k$ in such case are likely to be smaller than those in Theorem \ref{thm:main}.
Moreover,
with focus on numerical methods,
it allows dealing with kernels that may not have an integral expression.
Although the bounds obtained in \cite{rr2013} is a special case of Theorem \ref{thm:lin},
the main focus of \cite{rr2013} was rather on the construction of the corresponding maximal coupling of the infinite-dimensional product measures $\pr$ and $\tilde \pr$.
In the setting of Theorem \ref{thm:lin} the existence of such coupling is unlikely,
as even the measurability of the diagonal,
required in the coupling inequality,
may be violated in the case of arbitrary measurable spaces.

At the same time,
in the setting of \cite{rr2013} (that is, under the Borel assumption) a stronger result in Theorem \ref{thm:main} holds true.
Although the latter theorem is only focused on the bounds,
Lemma \ref{lem:main.1} which is the core in the proof of Theorem \ref{thm:main},
yields the coupling of finite-dimensional marginals $\pr^n$ and $\tilde\pr^n$ that trivially extends to the infinite-dimensional case by Proposition \ref{prop:I-T}.
With focus on bounds,
to show that Theorem \ref{thm:main} indeed provides a less conservative estimate than \cite{rr2013} (or Theorem \ref{thm:lin}),
let us mention that
\begin{equation}\label{eq:bound.comparison}
  2 - 2\prod_{k=0}^n\left(1-\frac12c_k\right) \leq \sum_{k=0}^n c_k
\end{equation}
for any non-negative sequence $(c_k)_{k\in \N_0}$ and any $n\in \N_0$,
which can be shown by induction over $n$.\footnote{
  Notice also that the right-hand side of \eqref{eq:bound.comparison} can be considered as the first-order term of the expansion of the product in the left-hand side.
}
In particular,
there are several cases when bounds in Theorem \ref{thm:main} are exact:
e.g. when kernels $P^k$ and $\tilde P^k$ are just measures on $X_k$ which corresponds to dealing with a sequence of iid random variables,
see also the example in Section \ref{sec:example}.
In contrast, the bounds in Theorem \ref{thm:lin} can grow unboundedly e.g. in case $c_k = c>0$ for all $k\in \N_0$.

Finally,
let us mention that it is of interest whether Theorem \ref{thm:main} holds true in the general case of arbitrary measurable spaces and arbitrary stochastic kernels --
recall however that even in such case the corresponding maximal coupling may not exist.
Although we do not provide the answer to this general question,
we show that under the assumption that kernels have densities,
one can obtain a version of Lemma \ref{lem:main.1} for the case of arbitrary measurable spaces:
this of course implies the validity of Theorem \ref{thm:main} in such case as well,
provided that all the kernels involved have densities.

\begin{lemma}\label{lem:int}
  Let $(X,\x)$ and $(Y,\y)$ be two arbitrary measurable spaces,
  and let measures $\mu,\tilde \mu\in \p(X,\x)$ and $\lambda \in\p(Y,\y)$.
  If kernels $K, \tilde K:X\to \p(Y,\y)$ are given by
  \begin{equation*}
    K_x(\d y) := k(x,y)\lambda(\d y),\qquad \tilde K_x(\d y) := \tilde k(x,y) \lambda(\d y)
  \end{equation*}
  where $k,\tilde k:X\times Y \to [0,\infty)$ are $\x\otimes \y$-measurable functions,
  then for the product measures $\q := \mu\otimes K$ and $\tilde \q := \tilde\mu\otimes \tilde K$ the inequality \eqref{eq:lem.main.1} holds true.
\end{lemma}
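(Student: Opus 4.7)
The plan is to bypass the coupling machinery of Lemma \ref{lem:main.1}, which relied on the Borel assumption, and to work entirely with explicit densities. Although no common dominating measure for $\mu$ and $\tilde\mu$ is assumed, one may always take $\sigma := \frac{1}{2}(\mu + \tilde\mu) \in \p(X,\x)$, which is a finite measure dominating both, and write $\mu = f\cdot \sigma$ and $\tilde\mu = \tilde f\cdot \sigma$ for $\x$-measurable densities $f,\tilde f:X\to [0,\infty)$ furnished by the Radon--Nikodym theorem (valid on any measurable space since $\sigma$ is finite).

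Combining this with the given density representations of $K$ and $\tilde K$, a direct application of Tonelli's theorem on rectangles $A\times B\in \x\otimes \y$ followed by a monotone class extension yields
\begin{equation*}
  \q(\d x,\d y) = f(x)\,k(x,y)\,\sigma(\d x)\,\lambda(\d y),\qquad \tilde\q(\d x,\d y) = \tilde f(x)\,\tilde k(x,y)\,\sigma(\d x)\,\lambda(\d y),
\end{equation*}
so that both product measures are absolutely continuous with respect to the common $\sigma$-finite measure $\sigma\otimes\lambda$. Using the standard identity that two measures with densities $g,\tilde g$ with respect to a common dominating measure $\rho$ have minimum of total mass $\int (g\wedge \tilde g)\,\d\rho$, together with Tonelli, one obtains
\begin{equation*}
  \|\q\wedge \tilde\q\| = \int_X \int_Y \bigl(f(x)k(x,y)\bigr)\wedge \bigl(\tilde f(x)\tilde k(x,y)\bigr)\,\lambda(\d y)\,\sigma(\d x).
\end{equation*}

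The key algebraic step is then the elementary pointwise inequality $(ac)\wedge (bd)\geq (a\wedge b)(c\wedge d)$, valid for all nonnegative reals $a,b,c,d$ and verified by a short case analysis on the signs of $a-b$ and $c-d$. Applying it inside the double integral with $a=f(x)$, $b=\tilde f(x)$, $c=k(x,y)$, $d=\tilde k(x,y)$ and integrating over $y$ first gives
\begin{equation*}
  \|\q\wedge \tilde\q\| \geq \int_X \bigl(f(x)\wedge \tilde f(x)\bigr)\,\|K_x\wedge \tilde K_x\|\,\sigma(\d x)\geq \Bigl(\inf_{x\in X}\|K_x\wedge \tilde K_x\|\Bigr)\cdot \int_X (f\wedge \tilde f)\,\d\sigma,
\end{equation*}
and the last integral is recognized as $\|\mu \wedge \tilde\mu\|$ via the same density identity applied to $\mu$ and $\tilde\mu$; this is exactly \eqref{eq:lem.main.1}.

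I do not anticipate a genuine obstacle in this route. The only point to verify is the joint measurability of the integrands: the minima $f\wedge \tilde f$ and $k\wedge \tilde k$ are measurable as pointwise minima of measurable functions, and the map $x\mapsto \|K_x\wedge \tilde K_x\|$ is $\x$-measurable by a standard Tonelli argument for nonnegative measurable functions. What the density hypothesis really buys us is the ability to avoid any maximal coupling of $\q$ and $\tilde\q$ on a product of arbitrary measurable spaces, where even the diagonal need not be measurable and the coupling inequality \eqref{eq:coup.in} may fail to be meaningful.
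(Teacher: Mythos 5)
Your argument is correct and follows essentially the same route as the paper's own proof: the paper likewise introduces the dominating measure $\nu:=\mu+\tilde\mu$ (your $\sigma$ is just a normalization of it), expresses $\|\q\wedge\tilde\q\|$ as an integral of a pointwise minimum of densities with respect to $\nu\otimes\lambda$, applies the inequality $\min(ac,bd)\geq\min(a,b)\min(c,d)$, and then factors out $\inf_{x\in X}\|K_x\wedge\tilde K_x\|$. No substantive difference.
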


\begin{proof}
  Note that the following expressions for $\q$ and $\tilde \q$ hold true:
  \begin{align*}
    \q(\d x\times \d y) &= k(x,y) [\mu\otimes \lambda](\d x\times \d y),
    \\
    \tilde\q(\d x\times \d y) &= \tilde k(x,y) [\tilde \mu\otimes \lambda](\d x\times \d y).
  \end{align*}
  To make them comparable,
  let us define $\nu := \mu+\tilde\mu$,
  so that
  \begin{align*}
    \|\q \wedge \tilde \q\| &= (\q\wedge \tilde \q)(X\times Y)
    \\
    &= \int_{X\times Y}\min\left(k(x,y)\frac{\d \mu}{\d \nu}(x),\tilde k(x,y)\frac{\d \tilde \mu}{\d \nu}(x)\right)[\nu\otimes\lambda](\d x\times \d y)
    \\
    &\geq \int_{X\times Y}\min\left(k(x,y),\tilde k(x,y)\right)\min\left(\frac{\d \mu}{\d \nu}(x),\frac{\d \tilde \mu}{\d \nu}(x)\right)[\nu\otimes\lambda](\d x\times \d y)
    \\
    &= \int_X (K_x\wedge \tilde K_x)(Y) (\mu\wedge\tilde\mu)(\d x) \geq \|\mu\wedge\tilde \mu\|\cdot \inf_{x\in X}\|K_x \wedge \tilde K_x\|
  \end{align*}
  as desired.
\end{proof}

\section{Example}
\label{sec:example}

To enlighten the theoretical results obtained above,
we study how conservative are the bounds in Theorem \ref{thm:main} on a simple example for which the analytical expression for the total variation distance is available.
For that purpose, we consider a Markov Chain with just two states,
that is $X = \{0,1\}$ and $P^k = P$ for all $k\in \N$.
The kernel $P$ is given by a stochastic matrix,
which for simplicity we assume to be diagonal: $P = \left(\begin{smallmatrix} 1&0\\ 0&1 \end{smallmatrix} \right)$.
The initial distribution,
denoted by $P^0 = \mu$,
is further given by $\mu(\{1\}) = p$, $\mu(\{0\}) = 1-p$.
We focus on the case when the perturbed stochastic process is a Markov Chain as well,
that is $\tilde P^k = \tilde P$ for all $k\in \N$,
and assume that $\tilde P = \left(\begin{smallmatrix} 1-\ve&\ve\\ \ve&1-\ve \end{smallmatrix} \right)$ for some $\ve\in (0,1)$.
The perturbed initial distribution we denote by $\tilde P^0 = \tilde \mu$,
it is given by $\mu(\{1\}) = p-\delta$ and $\mu(\{0\}) = 1-(p-\delta)$;
here $\delta\in (p-1,p)$.

Since the product space $\Omega_n = \{0,1\}^{n+1}$ is finite,
the precise value of the total variation distance between the product measures $\pr^n$ and $\tilde \pr^n$ can be computed as
\begin{equation*}
  \left\|\pr^n - \tilde\pr^n\right\| = \sum_{\omega_n\in\Omega_n} \left|\pr^n(\{\omega_n\})  - \tilde\pr^n(\{\omega_n\})\right|
\end{equation*}
Thanks to the special choice of $P$,
the measure $\pr^n$ is supported only on two points in $\Omega_n$ which makes it is easy to obtain an analytic expression for the sum above.
Let us abbreviate: $f_n(\ve):= 1 - (1-\ve)^n$.
There are three possible cases depending on the interplay between parameters $p$, $\ve$, $\delta$ and $n$:
\begin{itemize}
  \item[a.] If $\delta < - \frac{pf_n(\ve)}{1-f_n(\ve)}$,
  then $\|\pr^n - \tilde\pr^n\| = 2(1-p)f_n(\ve) - 2\delta(1-f_n(\ve))$.
  \item[b.] If $- \frac{pf_n(\ve)}{1-f_n(\ve)}\leq \delta \leq \frac{(1-p)f_n(\ve)}{1-f_n(\ve)}$,
  then $\|\pr^n - \tilde\pr^n\| = 2f_n(\ve)$.
  \item[c.] If $\delta > \frac{(1-p)f_n(\ve)}{1-f_n(\ve)}$,
  then $\|\pr^n - \tilde\pr^n\| =  2pf_n(\ve) + 2\delta(1-f_n(\ve))$.
\end{itemize}
According to Theorem \ref{thm:main},
the bounds are $\|\pr^n - \tilde\pr^n\|\leq 2(f_n(\ve) + \delta(1-f_n(\ve)))$.
Clearly, in all cases [a.], [b.] and [c.] the bounds hold true.
Moreover, if only the stochastic matrix is perturbed,
that is $\delta = 0$ (case [b.]),
then the total variation is $2f_n(\ve)$ which precisely coincides with the bounds provided by Theorem \ref{thm:main}.
Similarly,
if $p=1$ and $\delta\in (0,1)$ (case [c.]),
then the total variation $2(f_n(\ve) + \delta(1-f_n(\ve)))$ provides another example when bounds in Theorem \ref{thm:main} are exact.
In all other cases it is easy to see that there is a strictly positive gap in the inequality \eqref{eq:thm.main}.

Unrelated to the precision of bounds in Theorem \ref{thm:main},
it is interesting to further comment on the phenomenon in case [b.].
If $p$, $\ve$ and $\delta$ are fixed,
then there exists $N\in \N$ such that for $n\geq N$ always [b.] holds:
indeed, it follows directly from the fact that the denominator $1-f_n(\ve) = (1-\ve)^n\to 0$ monotonically as $n\to\infty$.
As a result, regardless of the value of $\delta\in (0,1)$,
for all $n$ big enough $\|\pr^n - \tilde\pr^n\|$ does not depend on $\delta$:
in particular, it is the same as in the case when $\delta = 0$.

\section*{Acknowledgements}

The last author would like to thank George Lowther for the idea of non-linear bounds,
Michael Greinecker for his hints on dealing with kernels,
and other users of \href{http://math.stackexchange.com/}{MathStackexchange} and \href{http://mathoverflow.net/}{MathOverflow} for their valuable comments regarding measure theory and coupling.

\bibliographystyle{amsalpha}
\bibliography{../my_bib}

\section{Appendix}
\label{sec:appendix}

For any set $X$ the corresponding diagonal is denoted by $\Delta_X := \{(x,x):x\in X\} \subseteq X^2$.
The set of all real numbers is denoted by $\R$ and the set of all natural numbers is denoted by $\N$.
We further write $\N_0 := \N\cup \{0\}$ and $\bar\N_0 := \N_0\cup\{\infty\}$.
For any $A\subseteq X$ we denote its indicator function by $1_A$.

We say that $X$ is a (standard) Borel space if $X$ is a topological space homeomorphic to a Borel subset of a complete separable metric space.
Any Borel space is assumed to be endowed with its Borel $\sigma$-algebra $\b(X)$.
An example of a Borel space is the space of real numbers $\R$ endowed with the Euclidean topology.

Given a measurable space $(X,\x)$ we denote the space of all $\sigma$-additive finite signed measures on it by $\bm(X,\x)$.
For any set $A\in\x$ we introduce an evaluation map $\theta_A:\bm(X,\x) \to \R$ given by $\theta_A(\mu) := \mu(A)$.
We always assume that $\bm(X,\x)$ is endowed with the smallest $\sigma$-algebra that makes all evaluation maps measurable.
The subspace of all probability measures in $\bm(X,\x)$ is denoted by $\p(X,\x)$.
In case $X$ is a Borel space,
we simply write $\bm(X)$ and $\p(X)$ in place of $\bm(X,\b(X))$ and $\p(X,\b(X))$ respectively.
If $(Y,\y)$ is another measurable space,
by a bounded kernel we mean a measurable map $K:X\to \bm(Y,\y)$ such that
\begin{equation*}
  \sup_{x\in X}\sup_{A\in \y}|K_x(A)|<\infty
\end{equation*}
where we write $K_x$ instead of a more cumbersome $K(x) \in \bm(Y,\y)$ for any $x\in X$.
In case $K_x \in \p(Y,\y)$ for any $x\in X$,
we say that the kernel $K$ is stochastic.
The condition that $K:X\to \bm(Y,\y)$ is measurable is equivalent to $K_{(\cdot)}(A):X\to \R$ being a measurable map for any $A\in \y$ \cite[Lemma 1.37]{k1997a}.
Clearly, any measure can be considered as a kernel which does not depend on its first argument.
Furthermore, any measure $\nu\in \bm(X,\x)$ admits the unique Hahn-Jordan decomposition given by $\nu = \nu^+ - \nu^-$,
where $\nu^+$ and $\nu^-$ are two mutually singular non-negative measures on $(X,\y)$ \cite[Section 2.1.2]{a1972}.
If $K:X\to \bm(Y,\y)$ is a kernel,
then
\begin{equation*}
  \|K\|:= \sup_{x\in X}\left(K^+_x(Y)+K^-_x(Y)\right)
\end{equation*}
defines the total variation of $K$.
For any two measures $\nu,\tilde\nu\in \bm(X,\x)$ we denote their minimum by $\nu\wedge\tilde\nu := \nu - (\nu - \tilde \nu)^- \in \bm(X,\x)$.
In particular, it holds that
\begin{equation}\label{eq:tv.min}
  \|\nu \wedge \tilde \nu\| = 1 - \frac12 \|\nu - \tilde \nu\| = 1 - \sup_{A\in \x}|\nu(A) - \tilde \nu(A)|,
\end{equation}
for any two probability measures $\nu,\tilde \nu\in \p(X,\x)$.

If $f:X\to Y$ is a measurable map between two measurable spaces $(X,\x)$ and $(Y,\y)$,
for any $\nu\in \bm(X,\x)$ the image measure $f_*\nu \in \bm(Y,\y)$ is given by
\begin{equation*}
  (f_*\nu)(A) := \nu\left(f^{-1}(A)\right)
\end{equation*}
for all $A\in \y$.
Given a family of measurable spaces $\{(X_k,\x_k)\}_{k\in \N_0}$ and an index set $I\subseteq \bar\N_0$,
we denote the product measurable space by
\begin{equation*}
  \prod_{k\in I}(X_k,\x_k) := \left(\prod_{k\in I} X_k,\bigotimes_{k\in I}\x_k\right),
\end{equation*}
where $\bigotimes_{k\in I}\x_k$ is the product $\sigma$-algebra.
Let $\tilde I\subseteq I$ and denote $(\Omega,\f) := \prod_{k\in I}(X_k,\x_k)$ and $(\tilde\Omega,\tilde\f) := \prod_{k\in \tilde I}(X_k,\x_k)$.
Let further $\pi: \Omega \to \tilde \Omega$ be an obvious projection map.
For any $\nu \in \bm(\Omega,\f)$ we say that $\pi_*\mu$ is the marginal of $\mu$ on $(\tilde \Omega,\tilde \f)$.

\end{document}